\documentclass[reqno,12pt]{amsart} 
\usepackage[a4paper,top=2.5cm,bottom=2.5cm,left=3cm,right=3cm]{geometry}
\usepackage{mathtools} 
\usepackage{amsfonts,amssymb,amsthm,stmaryrd}
\usepackage{mathrsfs} 
\usepackage{bbold} 
\usepackage{stackengine} 
\usepackage{bm} 
\usepackage{enumitem}
\usepackage{graphicx}
\usepackage{float} 
\usepackage{tikz} 
\usepackage{array}
\usepackage{tabularray}
\usepackage{multirow} 
\usepackage{xcolor}
\usepackage{hyperref}
\usepackage{cleveref} 
\usepackage{comment}
\usepackage[toc,page]{appendix}

\title{Large values of shifted mixed character sums}

\author[N. Tardy]{N\'eo Tardy}
\address{DER de math\'ematiques, ENS Paris-Saclay, 91190 Gif-sur-Yvette, France }
\email{neo.tardy@ens-paris-saclay.fr}

\date{}

\newtheorem{theorem}{Theorem}[section]
\newtheorem{proposition}[theorem]{Proposition}
\newtheorem{lemma}[theorem]{Lemma}

\theoremstyle{definition}

\theoremstyle{remark}

\numberwithin{equation}{section}

\begin{document}

\maketitle

\begin{abstract}
    We consider sums of the form
    $$F_\chi(\alpha,\beta;\theta) \vcentcolon= \sum_{\alpha p<n\le\beta p}\chi(n)e(n\theta),$$
    where $\chi$ is a non-principal Dirichlet character modulo a prime number $p$. We prove that
    $$
    \sqrt p \log \log p
    \ll \max_{0 \le \theta < 1}{\left|F_\chi(\alpha,\beta;\theta)\right|}
    \ll \sqrt{p}\log p,
    $$
    generalizing an old result of Montgomery as well as a recent result of Iggidr in two aspects: we allow general non-principal characters $\chi$, and we consider incomplete mixed character sums.
\end{abstract}

\section{Introduction}

The distribution of \textit{mixed character sums} of the form
$$ \sum_{n \le x} \chi(n)e(n\theta), $$
where $\chi$ is a Dirichlet character and $e(t) \vcentcolon= e^{2\pi i t}$, is a well-studied problem that has given rise to many recent developments. In this paper, we are interested in the case where one fixes $x$ and varies $\theta$ as in the case of the \textit{Fekete polynomials}
$$ F_p(\theta) \vcentcolon= \sum_{n=1}^{p-1} \left(\frac{n}{p}\right)e(n\theta),$$
where $p$ is prime. This particular case can be regarded as the simplest case to analyze, and its distribution has been widely studied (see e.g. \cite{borwein2002explicit,erdelyi2012upper,erdelyi2018improved,erdelyi2007large} for various bounds on the $L^q$-norms ($0<q<\infty$) and on the Mahler measure of $F_p$). When $q=2k$ is an even positive integer, the asymptotics of the $L^{2k}$-norms of the Fekete polynomials as $p\to+\infty$ were determined by Günther and Schmidt \cite{gunther2017lq} to be $\|F_p\|_{2k}\sim \phi_k\sqrt p$, where $\phi_k$ only depends on $k$. This was later generalized by Klurman, Lamzouri and Munsch \cite{klurman2023l_q} for all $q>0$. Their method is based on a randomization of $F_p(\theta)$, where they prove that, if $F_p(\theta)$ is viewed as a random continuous function defined on a suitable probability space, then it converges in distribution to a random function that can be computed explicitly. In particular, they were able to determine the asymptotic behaviour of the Mahler measure $M_0(F_p)=\lim\limits_{q\to0^+} \|F_p\|_{q}$ of the Fekete polynomials as $p\to+\infty$, proving that $M_0(F_p) \sim \phi_0\sqrt{p}$ for some absolute constant $\phi_0\approx0.749$.

Possible generalizations of the Fekete polynomials are \textit{Turyn polynomials}
$$ F_p^r(\theta) \vcentcolon= \sum_{n=1}^{p} \left(\frac{n+r}{p}\right)e(n\theta), $$
where $r$ is an integer, which we typically assume to be of the form $ \lfloor \alpha p \rfloor $ for some fixed $\alpha \in \mathbb{R}$, and \textit{shifted mixed character sums}
$$ F_\chi(\alpha,\beta;\theta) \vcentcolon= \sum_{\alpha p<n\le\beta p}\chi(n)e(n\theta). $$
Note that if $r = \lfloor \alpha p \rfloor$, we have $F_p^r(\theta) = F_{\left(\frac{\bullet}{p}\right)}(\alpha,\alpha+1;\theta)$. In \cite{gunther2017lq}, the authors extended their study of the $L^{2k}$-norms to the asymptotic behaviour of $\|F_p^r\|_{2k}$ when $\frac{r}{p}\to\alpha$. More generally, the distribution of $F_\chi(\alpha,\beta;\theta)$ when $\theta$ varies was understood in a recent paper by Bober, Klurman and Shala \cite{bober2025distribution} using a similar randomization trick as in \cite{klurman2023l_q}.

A natural question that arises in connection with the distribution concerns large values, which provide information about the tail of the distribution of the values of $F_\chi(\alpha,\beta;\theta)$ as $\theta$ varies. The first result in this direction is due to Montgomery \cite{montgomery1980exponential} and was proved for Fekete polynomials. He showed that for all large enough $p$, we have
\begin{equation}\label{montgomerybounds}
    \sqrt{p}\log \log p \ll \max\limits_{0 \le \theta < 1} |F_p(\theta)| \ll \sqrt{p}\log p,
\end{equation}
and conjectured that
$$ \max\limits_{0 \le \theta < 1} |F_p(\theta)| \asymp \sqrt{p}\log \log p. $$
Later, Conrey, Granville, Poonen and Soundararajan \cite{conrey2000zeros} gave an alternative proof of \eqref{montgomerybounds}. Very recently, Iggidr \cite[Corollary 1.4]{iggidr2026distribution} generalized the lower bound of \eqref{montgomerybounds} to all non-principal Dirichlet characters $\chi$ mod $p$ by proving that
$$ \max\limits_{0 \le \theta < 1} |F_\chi(0,1;\theta)| \gg \sqrt{p}\log \log p, $$
and claimed that his method can provide the same bound for Turyn polynomials $F_\chi(\alpha,\alpha+1;\theta)$. In fact, the result in \cite{iggidr2026distribution} is more general, and describes the exact distribution of the large values of $F_\chi(0,1;\theta)$.

Since a naive heuristic suggests that Dirichlet characters behave like random variables uniformly distributed on their support, it is interesting to compare \eqref{montgomerybounds} to known bounds for the random analogue
$$ Q_N(\theta) \vcentcolon= \sum_{n=1}^{N} X_n e(n\theta), $$
where the $X_n$ are independent Rademacher random variables. In this setting, it was proved by Salem and Zygmund \cite{salem1954some} that, almost surely, for $N$ large enough,
$$ \max\limits_{0 \le \theta < 1} |Q_N(\theta)| \asymp \sqrt{N\log N}, $$
later improved by Hal\'asz \cite{halasz1973result} to $\sim \sqrt{N\log N}$. A more precise model would take into account the multiplicativity of characters, and we set
$$ P_N(\theta) \vcentcolon= \sum_{n=1}^{N} f(n) e(n\theta), $$
where $f$ is a random multiplicative function. If $f$ is a Rademacher or a Steinhaus random multiplicative function, it was proved by Benatar, Nishry and Rodgers \cite{benatar2022moments} that, almost surely, for $N$ large enough,
$$
\sqrt{N}\left(\frac{\log N}{\log \log N}\right)^{1/6}
\ll \max\limits_{0 \le \theta < 1} |P_N(\theta)|
\ll \sqrt{N} \exp \left(3\sqrt{\log N\log \log N}\right).
$$
This was recently improved by Hardy \cite{hardy2024bounds}, who proved that, almost surely, for $N$ large enough,
$$ \max\limits_{0 \le \theta < 1} |P_N(\theta)| \gg \sqrt{N \log N}, $$
and conjectured that this lower bound is of the correct order of magnitude.
Surprisingly, these bounds are inconsistent with the conjectured size of $\max\limits_{0 \le \theta < 1} |F_p(\theta)|$, which suggests that Dirichlet characters exhibit more cancellation than a typical random multiplicative function. Indeed, $P_N(\theta)$ is expected to behave like a complex Gaussian random variable when $\theta$ varies (see \cite[Theorem 1.2]{benatar2022moments}). However, the limiting distribution of $F_p(\theta)$ (or more generally of $F_\chi(\alpha,\beta;\theta)$) discovered in \cite{bober2025distribution,klurman2023l_q} is not Gaussian. This may be explained by the fact that Dirichlet characters exhibit analytic structures that are absent from random multiplicative functions (see Proposition~\ref{prop:BKS}).

In this paper, we generalize Montgomery's bounds \eqref{montgomerybounds} to all shifted mixed character sums.

\begin{theorem}\label{thm:main}
    Let $p$ be a sufficiently large prime, $\chi$ a non-principal Dirichlet character mod $p$, and $\beta>\alpha\geq0$. Then, 
    $$
    \sqrt p \log \log p
    \ll_{\alpha,\beta} \max_{0 \le \theta < 1}{\left|F_\chi(\alpha,\beta;\theta)\right|}
    \ll_{\alpha,\beta} \sqrt{p}\log p.
    $$
\end{theorem}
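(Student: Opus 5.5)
The upper bound is the standard Pólya--Vinogradov argument with a Fourier expansion of the interval, and the lower bound follows Montgomery's strategy, made uniform in $\chi$ through Gauss sums.

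\textbf{Upper bound.} Expand $\chi$ in additive characters using the Gauss sum:
$$\chi(n)=\frac{1}{\tau(\bar\chi)}\sum_{a \bmod p}\bar\chi(a)e(an/p),\qquad |\tau(\bar\chi)|=\sqrt p .$$
Substituting into $F_\chi(\alpha,\beta;\theta)$ gives
$$F_\chi(\alpha,\beta;\theta)=\frac{1}{\tau(\bar\chi)}\sum_{a}\bar\chi(a)\sum_{\alpha p<n\le\beta p}e\bigl(n(\theta+a/p)\bigr).$$
The inner geometric sum is bounded by $\min\bigl((\beta-\alpha)p+1,\ \|\theta+a/p\|^{-1}\bigr)$. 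As $a$ runs mod $p$, the points $\theta+a/p$ are $1/p$-spaced. Hence at most $O(1)$ of them give a term of size $O_{\alpha,\beta}(p)$, and the remaining ones contribute at most $\sum_{k\le p}p/k\ll p\log p$. Dividing by $\sqrt p$ yields $\ll_{\alpha,\beta}\sqrt p\log p$ uniformly in $\theta$.

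\textbf{Lower bound.} I will work at $\theta=b/p$ with $b$ chosen as an integer, or more flexibly at $\theta=b/p+\delta$. The same expansion gives
$$F_\chi(\alpha,\beta;b/p)=\frac{1}{\tau(\bar\chi)}\sum_{a}\bar\chi(a)\,S(a+b),\qquad S(m)=\sum_{\alpha p<n\le\beta p}e(nm/p).$$
For $m\not\equiv 0$, write $S(m)=\frac{e(m\beta p/p)-e(m\alpha p/p)}{e(m/p)-1}$ up to the rounding at the endpoints. Expanding $1/(e(m/p)-1)\approx p/(2\pi i m)$ for $|m|<p/2$ gives
$$F\approx \frac{p}{2\pi i\,\tau(\bar\chi)}\sum_{0<|m|<p/2}\frac{\bar\chi(m-b)\,c_m}{m}+O_{\alpha,\beta}(\sqrt p),$$
where the coefficient $c_m=e(m\beta')-e(m\alpha')$, with $\alpha'=\lfloor \alpha p\rfloor/p$ and similarly $\beta'$, is a bounded oscillating factor. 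Rather than fixing $b$, I will average over $b$ against a weight, as Montgomery does: consider
$$\frac1p\sum_b |F(b/p)|\cdot w(b)\le \max_\theta|F|,$$
or more cleanly a twisted sum $\sum_b F(b/p)\,\overline{g(b)}$ for a suitable test function $g$. The goal is to isolate a contribution $\sqrt p\sum_{m\le y}\frac{1}{m}$ with $y=\log p$, the length at which the character values can be aligned. Concretely, I will choose $b$ so that $\bar\chi(m-b)$ correlates with a prescribed phase for all small $m\le y\asymp \log p$. Since the map $b\mapsto(\chi(b-1),\dots,\chi(b-y))$ is equidistributed by Weil's bound when $y\ll\log p$ (the relevant character sums have conductor complications only if $y^{\,y}$ exceeds roughly $\sqrt p$), there is some $b$ making all $\bar\chi(m-b)\,c_m$ have positive real part at least a constant on a positive proportion of $m\le y$. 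The head of the sum then gives $\gg\sqrt p\log y=\sqrt p\log\log p$. The factor $c_m$ is handled by restricting to $m$ in an arithmetic progression or set where $|c_m|\gg_{\alpha,\beta}1$; since $\beta>\alpha$, a positive density of $m$ qualifies.

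\textbf{Main obstacle.} The hard step is controlling the tail $\sum_{y<|m|<p/2}\bar\chi(m-b)c_m/m$ so that it does not cancel the head. My approach is to average over the admissible $b$ rather than pick a single one. Using orthogonality in $b$ together with Weil/Pólya--Vinogradov bounds for the shifted products $\sum_b\prod_j\chi(m_j-b)$, I would show that the tail is $O(1)$ on average while the head is forced to be large. The smoothness of the truncation in $m$ must be arranged (for instance with a Fejér-type kernel) so that the tail terms decouple. I expect the bookkeeping of the endpoint phases $c_m$ and the uniformity in $\alpha,\beta$ to be routine. Alternatively, if available, I would use Proposition~\ref{prop:BKS}, the analytic structure result from \cite{bober2025distribution}, to express $F_\chi$ in terms of a limiting series where this decoupling is already built in.
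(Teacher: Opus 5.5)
Your upper bound is correct (it is the direct P\'olya--Vinogradov computation; the paper instead reads it off Proposition~\ref{prop:BKS} with $K=p$). Your plan for the lower bound matches the architecture of the paper's Lemma~\ref{lem:key}: prescribe $\chi(b-m)$ for small $|m|$ using Weil's bound, then show the tail is $O(1)$ in mean square over the admissible $b$, via Weil's bound for polynomials $(x-l_1)(x-l_2)^{d-1}\prod_{|l|\le y}(x-l)^{j_l}$. The genuine gap is the evaluation point $\theta=b/p$ on which you build everything. There $c_m=e(m(B+1)/p)-e(m(A+1)/p)$ with $A=\lfloor\alpha p\rfloor$, $B=\lfloor\beta p\rfloor$, so $|c_m|=2|\sin(\pi m(B-A)/p)|$. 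When $\beta-\alpha\in\mathbb{Z}$ (Fekete and Turyn polynomials, the central cases of the theorem) one has $B-A=(\beta-\alpha)p$, hence $c_m=0$ for every $m$. This is not a removable artifact: $(\alpha p,\beta p]$ is then a union of $\beta-\alpha$ complete residue systems, so $F_\chi(\alpha,\beta;b/p)=(\beta-\alpha)\bar\chi(b)\tau(\chi)$ has modulus at most $(\beta-\alpha)\sqrt p$ for every integer $b$. So your claim that ``since $\beta>\alpha$, a positive density of $m$ qualifies'' is false, and no selection of, or averaging over, $b$ at these points can give $\sqrt p\log\log p$.

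The missing idea is to evaluate at $\theta=(b+t)/p$ with a fixed $t\in(0,1)$ chosen in terms of $\alpha,\beta$; your mention of $b/p+\delta$ gestures at this, but you never use it. Your Gauss-sum expansion goes through verbatim (there is no longer a special term $m\equiv 0$). It yields coefficients $\frac{e(\beta(m+t))-e(\alpha(m+t))}{m+t}$, of modulus $\frac{2|\sin(\pi(\beta-\alpha)(m+t))|}{|m+t|}$, which no longer vanish identically. The paper takes $t\in[0.1,0.9]$ with $(\beta-\alpha)t,(\alpha+\beta)t\notin\frac12\mathbb{Z}$. The second condition keeps the real parts $\cos(2\pi\beta(m+t))-\cos(2\pi\alpha(m+t))=-2\sin(\pi(\alpha+\beta)(m+t))\sin(\pi(\beta-\alpha)(m+t))$ non-degenerate, which handles real $\chi$, where you can only choose signs. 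Lemma~\ref{lem:sumcos} then gives $\gg\log y$ because the good $m$ have positive proportion in every dyadic block; a positive proportion of $[1,y]$ alone would not suffice. A secondary point: for $\chi$ of large order you can only prescribe $\chi(b-m)$ up to arcs. Expanding sharp arc indicators by orthogonality in $\mu_d$ costs a factor $\ll\log d$ per shift in the Weil error, so $y\asymp\log p$ is not available this way. You need $y\ll\log p/\log\log p$ (the paper uses $K_0=\lfloor\log p/(\log\log p)^2\rfloor$), which still gives $\log y\asymp\log\log p$.
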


Our first step in the proof of Theorem \ref{thm:main} relies on the following randomization trick introduced by Klurman, Lamzouri and Munsch \cite{klurman2023l_q} and then used by Bober, Klurman and Shala \cite{bober2025distribution} and Iggidr \cite{iggidr2026distribution} to study mixed character sums. The key idea is to introduce randomness by looking at $F_\chi(\alpha,\beta;\theta)$ as the continuous random process
$$
k \mapsto F_\chi\left(\alpha,\beta;\frac{k+t}{p}\right),
$$
where $0 \le t < 1$. Using probabilistic methods, they study its limiting distribution as $p\to+\infty$, which takes the form of an infinite sum of independent random variables. In the present case, we do not need to invoke the probabilistic interpretation, but we use the aforementioned idea by writing
\begin{equation}\label{maxmax}
    \max_{0 \le \theta < 1}{\left|F_\chi(\alpha,\beta;\theta)\right|}
    = \max_{0 \le k \le p-1} \max_{0 \le t < 1} \left|F_\chi\left(\alpha,\beta;\frac{k+t}{p}\right)\right|.
\end{equation}
This allows us to use the following proposition due to Bober, Klurman and Shala \cite[Proposition 3]{bober2025distribution}, which provides a formula for handling incomplete sums.

\begin{proposition}\label{prop:BKS}
    Let $p$ be a prime, $\chi$ a non-principal Dirichlet character mod $p$, $\beta>\alpha\ge0$, $0 \le k \le p-1$, $t \in (0,1)$ and $K \ge 1$. Then,
    \begin{equation}\label{BKShformula}
        \begin{multlined}
            F_\chi\left(\alpha,\beta;\frac{k+t}{p}\right)
            = \frac{\tau(\chi)e(\alpha k)}{2\pi i} \sum_{\left|l\right|<K}{\frac{e\left(\beta\left(l+t\right)\right)-e\left(\alpha\left(l+t\right)\right)}{l+t}\overline{\chi}\left(k-l\right)} \\
            + O\left(\frac{p\log{p}}{K}+1\right),
        \end{multlined}
    \end{equation}
    where $\tau(\chi)$ is the Gauss sum associated with $\chi$.
\end{proposition}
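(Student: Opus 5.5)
The plan is to expand $\chi$ in additive characters and evaluate the resulting geometric sums explicitly. Since $p$ is prime and $\chi$ is non-principal, $\chi$ is primitive, so for every integer $n$
$$\chi(n)=\frac{1}{\tau(\overline{\chi})}\sum_{m \bmod p}\overline{\chi}(m)\,e\!\left(\frac{mn}{p}\right),\qquad \frac{1}{\tau(\overline\chi)}=\frac{\chi(-1)\tau(\chi)}{p}.$$
I would first shift $n=N_1+j$ with $N_1=\lfloor \alpha p\rfloor$ and $0<j\le N_2-N_1$, where $N_2=\lfloor\beta p\rfloor$. This shift produces the prefactor $e(N_1(k+t)/p)$, which equals $e(\alpha k)$ times a phase $e(\alpha t)+O(1/p)$. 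I will track that phase carefully so that the exponentials combine into $e(\beta(l+t))-e(\alpha(l+t))$ in the final formula.

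Substituting the expansion and swapping sums gives, for each residue $m$, a geometric sum in $j$ with ratio $e(x_m)$, where $x_m=(m+k+t)/p$. I reindex by the representative $l\equiv m+k \pmod p$ with $|l|<p/2$. Then $\overline\chi(m)=\overline\chi(-1)\overline\chi(k-l)$, and since $e(N x_m)$ depends only on $l$ for integer $N$, every exponential can be written in terms of $(l+t)/p$. Replacing $e(N_2(l+t)/p)$ by $e(\beta(l+t))$ costs $O(|l+t|/p)$. Replacing the denominator $e((l+t)/p)-1$ by $2\pi i(l+t)/p$ uses
$$\frac{1}{e(x)-1}=\frac{1}{2\pi i x}-\frac12+O(|x|).$$
After these replacements, the main term is exactly the sum over $|l|<p/2$ in \eqref{BKShformula}.

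There are two error contributions to control.

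\textbf{The tail $K\le |l|<p/2$.} I would not bound this trivially, since that only gives $\sqrt p\log(p/K)$. Instead I would apply partial summation in $l$ against the weight $1/(l+t)$, which is monotone on each side of the origin. The partial sums $\sum_{l\le L}\overline\chi(k-l)e(\gamma l)$, for $\gamma\in\{\alpha,\beta\}$, are incomplete mixed character sums. By the Pólya–Vinogradov inequality for mixed sums they are $\ll \sqrt p\log p$ uniformly. The weight contributes a factor $1/K$, and the Gauss sum contributes $|\tau(\chi)|=\sqrt p$, so the tail is $\ll p\log p/K$.

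\textbf{The Taylor errors.} The $O(|x|)$ term and the $O(|l+t|/p)$ term contribute $O(1)$ per $l$ before normalization, so summed over $|l|<p/2$ and multiplied by $|\tau(\chi)|/p$ they give $O(\sqrt p)$. This is not good enough. The constant $-\tfrac12$ term is again a twisted incomplete character sum, bounded by $\sqrt p\log p$, and after normalization by $\sqrt p/p$ it gives $O(\log p)$, also too big for $O(1)$. The remedy I would try first is to avoid the truncation-error bookkeeping altogether. Write the exact contribution as a smooth function $g(l+t)$, with
$$g(y)=\frac{e(N_2y/p)-e(N_1y/p)}{e(y/p)-1}-\frac{p\,\bigl(e(\beta y)-e(\alpha y)\bigr)}{2\pi i y}.$$
Then $g$ has bounded variation $O(1)$ over the whole range, so partial summation against Pólya–Vinogradov again gives $\ll \sqrt p\log p\cdot\sqrt p/p=\log p$. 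If that still falls short of $O(1)$, I would exploit cancellation in the complete sum over all $l$, which vanishes up to boundary terms, to absorb the remainder into $O(1+p\log p/K)$.

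This final error accounting — obtaining $O(1)$ rather than $O(\log p)$ or $O(\sqrt p)$ from the non-tail part — is the step I expect to be the main obstacle.
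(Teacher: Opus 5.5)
The paper does not prove this proposition; it imports it from Bober--Klurman--Shala. Your argument therefore has to stand on its own, and as written it has two genuine gaps.

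\textbf{The phase.} Your claim $e(N_1(k+t)/p)=e(\alpha k)\bigl(e(\alpha t)+O(1/p)\bigr)$ is false. Indeed $e(N_1k/p)=e(\alpha k)\,e(-\{\alpha p\}k/p)$, and $\{\alpha p\}k/p$ is not small when $k\asymp p$. More to the point, after your (correct) reindexing $l\equiv m+k \pmod p$, that prefactor merges with $e(mN_1/p)$ into $e(N_1(l+t)/p)$, and you obtain the exact identity
\[
F_\chi\Bigl(\alpha,\beta;\frac{k+t}{p}\Bigr)=\frac{\tau(\chi)}{p}\sum_{|l|<p/2}\overline{\chi}(k-l)\sum_{\alpha p<n\le\beta p}e\Bigl(\frac{n(l+t)}{p}\Bigr).
\]
Here $k$ enters only through $\overline{\chi}(k-l)$, so no bookkeeping can produce $e(\alpha k)$. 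What your method proves, and what is true, is the formula without that factor. For $\alpha\notin\mathbb{Z}$ the factor $e(\alpha k)$ in the displayed statement is a slip. It is harmless for the paper only because, once it is also deleted from $\widetilde{F}_{\chi,k}$, everything downstream uses $|\widetilde{F}_{\chi,k}|$.

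\textbf{The error analysis.} You flag this yourself but do not close it. As stated, $g(l+t)$ does not have bounded variation in $l$, because of the oscillating factors $e(\gamma(l+t))$. What is true is $g(y)=e(\beta y)h_\beta(y/p)-e(\alpha y)h_\alpha(y/p)$ with $h_\gamma$ smooth and bounded on $[-\frac12,\frac12]$. Partial summation against the twisted P\'olya--Vinogradov bound then gives $O(\log p)$. That suffices for $K\ll p$, which is all the paper uses ($K=p$ and $K=p-K_0-1$). It does not suffice for $K\gg p\log p$, where the statement demands $O(1)$. There your truncation at $|l|<p/2$ has discarded main-term contributions you can only bound by $O(\log p)$, and the appeal to cancellation in the complete sum does not fix this.

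The missing idea is that these \emph{errors} are not errors at all. For $c,x\notin\mathbb{Z}$ one has, with symmetric summation,
\[
\sum_{j\in\mathbb{Z}}\frac{e\bigl(c(x+j)\bigr)}{2\pi i\,(x+j)}=\frac{e\bigl((\lfloor c\rfloor+1)x\bigr)}{e(x)-1}.
\]
With $c=\gamma p$ and $x=(l+t)/p$, this says that $\sum_{\alpha p<n\le\beta p}e(n(l+t)/p)$ is exactly the sum of $p\,(e(\beta y)-e(\alpha y))/(2\pi i y)$ over $y\in l+t+p\mathbb{Z}$. Your $-\frac12$ term and Taylor remainders are precisely the translates $j\neq0$. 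Since $\overline{\chi}(k-l)$ is $p$-periodic, $F_\chi$ equals $\frac{\tau(\chi)}{2\pi i}$ times the full sum over $l\in\mathbb{Z}$ of the main-term summand, up to $O(1)$. That $O(1)$ comes from half-counting the endpoint $n=\gamma p$ when $\gamma p\in\mathbb{Z}$.

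What remains is the tail $|l|\ge K$. There the uniform P\'olya--Vinogradov bound you invoke is unavailable, since it fails for intervals longer than $p$. Instead, expand $\overline{\chi}(k-l)=\tau(\chi)^{-1}\sum_m\chi(m)e(m(k-l)/p)$, so the factor $\tau(\chi)$ cancels. Split off the single $m_0$ minimising $\|\gamma-m_0/p\|$. Its contribution is $\ll\bigl|\sum_{|l|\ge K}e(l\delta)/(l+t)\bigr|=O(1)$ uniformly in $\delta$, by symmetric summation and a sine-integral bound. The other $m$ contribute $\ll\sum_{m\neq m_0}(K\|\gamma-m/p\|)^{-1}\ll p\log p/K$. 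This gives $O(p\log p/K+1)$ for every $K\ge1$, and explains the $+1$.
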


Note that, since a non-principal character $\chi$ modulo a prime number $p$ is a primitive character, we have $|\tau(\chi)|=\sqrt{p}$. Thus, the upper bound in Theorem \ref{thm:main} follows immediately from Proposition \ref{prop:BKS} with $K=p$.

\subsection*{Outline of the proof}
Our proof uses similar ideas to those of Montgomery \cite{montgomery1980exponential} and Conrey, Granville, Poonen and Soundararajan \cite{conrey2000zeros}, incorporating Proposition~\ref{prop:BKS} as a new ingredient. We observe in Montgomery's proof that the value of $t \in (0,1)$ does not affect the size of the maximum in \eqref{maxmax}. Thus, for the lower bound, we view $t$ as being fixed. Then, we shall prove that there exists an integer $k \in [0,p-1]$ such that $F_\chi\left(\alpha,\beta;\frac{k+t}{p}\right) \gg_{\alpha,\beta} \sqrt{p}\log \log p$. To do so, we will prove that there is a set $\mathcal{S}$ of values of $k$ for which we can prescribe the values of $\chi(k-l)$ for all $|l| \le K_0$ for some $K_0 \ge 1$. With a suitable choice of the values, we will be able to prove that the main term of \eqref{BKShformula} has size $\gg \sqrt{p}\log K_0$. Thus, in the first instance, we shall view $K_0$ as a bounded power of $\log p$. We will construct the set $\mathcal{S}$ in Lemma \ref{lem:key}, and the key ingredient for that is the following proposition (see e.g. \cite[Theorem 11.23]{iwaniec2021analytic}).

\begin{proposition}[Weil's bound for character sums]\label{prop:Weil}
    Let $p$ be a prime and $\chi$ a non-principal Dirichlet character mod $p$ of order $d$. Suppose $P\in\mathbb{F}_p\left[X\right]$ has $m$ distinct roots and $P$ is not a $d$-th power. Then,
    $$ \left|\sum_{k=0}^{p-1}\chi\left(P(k)\right)\right|\le\left(m-1\right)\sqrt p. $$
\end{proposition}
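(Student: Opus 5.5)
This is the classical Weil bound, and we would prove it by the standard route: interpret the sum as the first coefficient of an $L$-function attached to a Kummer covering of the projective line, show that this $L$-function is a polynomial of degree at most $m-1$, and then invoke the Riemann hypothesis for curves over finite fields.

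\textbf{Step 1: set-up and multiplicativity.} Write $P=c\prod_{i=1}^{m}(X-a_i)^{e_i}$ with $a_i\in\overline{\mathbb{F}}_p$ distinct. Since $\chi$ has order $d$, we may reduce each $e_i$ modulo $d$. For $n\ge1$ put $\chi_n=\chi\circ N_{\mathbb{F}_{p^n}/\mathbb{F}_p}$ and
$$ S_n=\sum_{x\in\mathbb{F}_{p^n}}\chi_n\left(P(x)\right),\qquad L(T)=\exp\Bigl(\sum_{n\ge1}\frac{S_nT^n}{n}\Bigr). $$
Grouping $x$ by its monic minimal polynomial $f$ over $\mathbb{F}_p$ gives an Euler product $L(T)=\prod_f\left(1-\lambda(f)T^{\deg f}\right)^{-1}$. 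Here $\lambda(f)=\chi\left(\mathrm{Res}(f,P)\right)$, up to a fixed power of $\chi(c)$, and $\lambda(f)=0$ when $f$ divides $P$.

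\textbf{Step 2: $L(T)$ is a polynomial of degree at most $m-1$.} The map $f\mapsto\lambda(f)$ extends to a completely multiplicative function on monic polynomials. Expanding the Euler product, the coefficient of $T^N$ is $\sum_{\deg f=N}\lambda(f)$. We would show that this sum vanishes for $N\ge m$, as follows.
\begin{itemize}
\item By the Chinese remainder theorem, $\lambda(f)$ depends only on the residues $f(a_i)$.
\item For $N\ge m$ (or $N\ge m-1$, depending on how $\infty$ ramifies), the map $f\mapsto(f(a_1),\dots,f(a_m))$ from monic polynomials of degree $N$ is equidistributed. Hence the coefficient factors as a product of complete character sums over a field.
\item Such a sum vanishes unless the twisted character is trivial. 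That happens exactly when all $e_i\equiv0 \pmod d$, i.e.\ when $P$ is a $d$-th power, which is excluded.
\end{itemize}
Consequently $L(T)=\prod_{j=1}^{r}(1-\omega_jT)$ with $r\le m-1$, and $S_n=-\sum_j\omega_j^n$. Equivalently, $L(T)$ is a factor of the zeta function of the smooth model of $y^d=P(x)$, which is ramified over at most $m+1$ points; Riemann--Hurwitz then bounds the degree by $m-1$.

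\textbf{Step 3: the Riemann hypothesis, $|\omega_j|\le\sqrt p$ (main obstacle).} This is the genuinely deep input. We would first try Stepanov's polynomial method in Bombieri--Schmidt form. One constructs an auxiliary polynomial vanishing to high order at every $x\in\mathbb{F}_{p^n}$ with $P(x)^{(p^n-1)/d}$ equal to a prescribed $d$-th root of unity. Its degree then bounds the number of such $x$, giving $\#\{x:\chi_n(P(x))=\zeta\}=p^n/d+O_{m,d}(p^{n/2})$ for $n$ even and $p^n$ large, and hence $S_n=O_{m,d}(p^{n/2})$. Since $\sum_j\omega_j^n=O(p^{n/2})$ holds for infinitely many $n$ in a suitable progression, a standard generating-function argument (radius of convergence of $\sum_n\sum_j\omega_j^nT^n$) forces $|\omega_j|\le\sqrt p$ for all $j$. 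Alternatively, one may simply quote Weil's theorem for the curve $y^d=P(x)$.

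Combining Steps 2 and 3 gives $|S_1|\le r\sqrt p\le(m-1)\sqrt p$, as claimed. The main difficulty is entirely in Step 3: Steps 1 and 2 are elementary bookkeeping, although the exact degree count in Step 2 needs care at infinity.
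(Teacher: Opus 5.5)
The paper does not prove this statement. It quotes it from Iwaniec--Kowalski (Theorem 11.23), so your outline reconstructs the cited proof rather than offering an alternative to anything in the paper. Steps 1 and 2 are sound in substance, with two precisions:
\begin{itemize}
\item $\lambda(f)$ depends only on $f$ modulo $\operatorname{rad}P=\prod_j g_j$, the distinct monic irreducible factors over $\mathbb{F}_p$. So for $N\ge m$ the complete sums in the $T^N$-coefficient are over the residue fields $\mathbb{F}_p[X]/(g_j)$, one per Galois orbit of roots. They are sums of $\chi\circ\mathrm{N}$ ($\mathrm{N}$ the norm to $\mathbb{F}_p$) raised to the multiplicity of $g_j$, not sums over each $\mathbb{F}_p(a_i)$ independently.
\item It is this direct count, not Riemann--Hurwitz, that gives $\deg L\le m-1$ for $\chi$ itself. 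Riemann--Hurwitz only bounds the sum $2g$ of the degrees of the $L$-functions of all the $\chi^k$, $1\le k\le d-1$.
\end{itemize}

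The step that fails as written is the count in Step 3: $\#\{x\in\mathbb{F}_{p^n}:\chi_n(P(x))=\zeta\}=p^n/d+O(p^{n/2})$ for every $\zeta\in\mu_d$. By Fourier inversion on $\mu_d$, this forces $\sum_x\chi_n^k(P(x))=O(p^{n/2})$ for every $1\le k\le d-1$. That is false when $d':=\gcd(d,e_1,\dots,e_m)>1$: writing $P=cQ^{d'}$ with $Q\in\mathbb{F}_p[X]$, the character $\chi^{d/d'}$ is constant on the values $P(x)$ off the roots. Concretely, for $d=4$ and $P=X^2$ (not a fourth power, $m=1$) one has $\chi(P(x))=\chi^2(x)\in\{0,\pm1\}$, so $\pm i$ is never attained. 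Accordingly, the Stepanov--Schmidt count is established under absolute irreducibility of $Y^d-P(X)$, i.e.\ $d'=1$, not merely under ``$P$ is not a $d$-th power''.

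The repair is a preliminary reduction. On every $\mathbb{F}_{p^n}$ we have $\chi_n(P(x))=\chi(c)^n\chi_n^{d'}(Q(x))$, where $Q$ has the same $m$ roots, $\chi^{d'}$ has order $d/d'>1$ by hypothesis, and $\gcd(d/d',e_1/d',\dots,e_m/d')=1$. So Stepanov applied to $(\chi^{d'},Q)$ gives $S_n=O(p^{n/2})$, and the rest of Step 3 goes through. The same reduction is needed if you instead quote Weil for the curve $y^d=P(x)$, which is reducible when $d'>1$. Also, your radius-of-convergence argument needs the bound for all sufficiently large $n$ in an arithmetic progression (which Stepanov provides), not merely for infinitely many $n$.

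Finally, your identification ``all $e_i\equiv0\pmod d$, i.e.\ $P$ is a $d$-th power'' holds only up to the leading coefficient, and this matters. Read literally in $\mathbb{F}_p[X]$, the statement is false: $P=c(X-a)^d$ with $\chi(c)\neq1$ is not a $d$-th power in $\mathbb{F}_p[X]$ and has $m=1$, yet $\left|\sum_k\chi(P(k))\right|=p-1$. The correct hypothesis is that $P$ is not of the form $cQ^d$ (equivalently, not a $d$-th power in $\overline{\mathbb{F}}_p[X]$). That is what your argument actually uses, and it holds in the paper's applications, where every polynomial is monic with some exponent in $[1,d-1]$. Relatedly, reducing the $e_i$ modulo $d$ in Step 1 is unnecessary, since Step 2 works with the original exponents. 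It is also not literally harmless: it changes $\chi(P(x))$ at the rational roots of factors whose exponent becomes $0$.
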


With the above choice of $K=K_0$, the error term in Proposition \ref{prop:BKS} would be too large compared to the main term. Nevertheless, we will see that this can be made smaller after averaging over $k \in \mathcal{S}$.

\subsection*{Notation}

For $x \in \mathbb{R}$, we denote by $\{x\}=x-\lfloor x \rfloor$ its fractional part and $\|x\|$ its minimal distance to an integer. We also use the standard notation $e(x) \vcentcolon= e^{2\pi i x}$. For $d \ge 1$, we set $\mu_d \vcentcolon= \{ z \in \mathbb{C} : z^d=1 \}$.

\subsection*{Acknowledgments}

I would like to thank Oleksiy Klurman for suggesting this problem to me and for his guidance throughout the project. I am also grateful to Besfort Shala for numerous helpful discussions, and to Seth Hardy and Marc Munsch for their comments on an earlier draft of the paper. Finally, I thank the University of Bristol for providing excellent working conditions.

\section{Proof of Theorem \ref{thm:main}}

We have seen that the upper bound in Theorem \ref{thm:main} is a direct consequence of Proposition \ref{prop:BKS}. Thus, we now turn to the proof of the lower bound.

Let $t \in [0.1,0.9]$ be fixed. Let us take a large prime $p$ and $\beta>\alpha\ge0$. Take $K_0 \vcentcolon= \left\lfloor\frac{\log{p}}{\left(\log{\log{p}}\right)^2}\right\rfloor$ and define for all integers $k \in [0,p-1]$ and Dirichlet characters $\chi$ mod $p$,
$$
F_{\chi,k}(\alpha,\beta)
\vcentcolon= F_\chi\left(\alpha,\beta;\frac{k+t}{p}\right)
$$
and
$$
{\widetilde{F}}_{\chi,k}(\alpha,\beta)
\vcentcolon=\frac{\tau(\chi)e(\alpha k)}{2\pi i} \sum_{\left|l\right|\le K_0}{\frac{e\left(\beta\left(l+t\right)\right)-e\left(\alpha\left(l+t\right)\right)}{l+t}\overline{\chi}\left(k-l\right)}.
$$

In the next lemma, we prove that there exists a choice of $k$ for which the values of $\chi(k-l)$ can be prescribed for all $|l| \le K_0$, and for which $F_{\chi,k}(\alpha,\beta)$ is well approximated by ${\widetilde{F}}_{\chi,k}(\alpha,\beta)$.

\begin{lemma}\label{lem:key}
    Let $\chi$ be a non-principal Dirichlet character mod $p$ of order $d$ and consider a collection $\left(\xi_l\right)_{\left|l\right|\le K_0}$ of elements of $\mu_d$. Let
    $$ \mathcal{S} \vcentcolon= \bigcap_{\left|l\right|\le K_0} \bigcup_{|a| \le \frac{d}{20}} \left\{ 0 \le k \le p-1 : \chi\left(k-l\right)=\xi_le\left(\frac{a}{d}\right) \right\}. $$
    Then, there exists $k \in \mathcal{S}$ such that
    \begin{equation}\label{lem:key-0}
        F_{\chi,k}(\alpha,\beta)={\widetilde{F}}_{\chi,k}(\alpha,\beta)+O(\sqrt{p}).
    \end{equation}
\end{lemma}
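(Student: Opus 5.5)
The plan is to detect membership in $\mathcal{S}$ with a weight built from powers of $\chi$, and then apply Proposition~\ref{prop:Weil} twice. The first application shows that $\mathcal{S}$ is large, namely $|\mathcal{S}| \ge p^{1-o(1)}$. The second shows that the tail of the formula in Proposition~\ref{prop:BKS} is small in mean square over $\mathcal{S}$. The lemma then follows by pigeonhole.

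\textbf{Step 1: a weight for $\mathcal{S}$.} For $z\in\mu_d\cup\{0\}$ let $w_l(z)$ be the indicator of $z\in\{\xi_l e(a/d): |a|\le d/20\}$. On $\mu_d$ it has the expansion
$$w_l(z)=\sum_{j \bmod d}\hat w_l(j)\,z^j,\qquad \hat w_l(0)=\frac{\#\{|a|\le d/20\}}{d}\ge c>0,$$
for an absolute constant $c$ (this uses $d\ge 2$; for small $d$ the arc is just $\{\xi_l\}$, so we may take $c=1/d\ge 1/20$ or simply handle bounded $d$ separately). The coefficients satisfy $\sum_j|\hat w_l(j)|\ll \log(d+1)\ll\log p$, by the usual $\ell^1$ bound for the Fourier transform of an interval.

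Set $W(k)=\prod_{|l|\le K_0}w_l(\chi(k-l))$. For $k\notin[-K_0,K_0]$ modulo $p$ (at most $2K_0+1$ exceptions) this equals $\mathbb 1_{\mathcal S}(k)$. Expanding the product gives
$$\sum_k W(k)=\sum_{(j_l)}\prod_l\hat w_l(j_l)\sum_k\chi\Bigl(\prod_l(k-l)^{j_l}\Bigr).$$
The term with all $j_l\equiv0$ contributes $\ge p\,c^{2K_0+1}-O(K_0)$. Every other term has a polynomial with distinct roots and some exponent $\not\equiv 0 \pmod d$, so it is not a $d$-th power, and Proposition~\ref{prop:Weil} bounds it by $2K_0\sqrt p$. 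Hence
$$|\mathcal S|\ \ge\ p\,c^{2K_0+1}-O\bigl((C\log p)^{2K_0+1}K_0\sqrt p\bigr).$$
With $K_0=\lfloor \log p/(\log\log p)^2\rfloor$ we have $(C\log p)^{2K_0}=p^{O(1/\log\log p)}$ and $c^{2K_0}=p^{-o(1)}$, so $|\mathcal S|\ge p^{1-o(1)}$.

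\textbf{Step 2: the tail in mean square.} Apply Proposition~\ref{prop:BKS} with $K=p$. This gives $F_{\chi,k}=\widetilde F_{\chi,k}+T(k)+O(\log p)$, where
$$T(k)=\sum_{K_0<|l|<p}c_l\,\overline\chi(k-l),\qquad |c_l|\ll_{\alpha,\beta}\frac{\sqrt p}{|l|}$$
(the bound on $c_l$ uses $t\in[0.1,0.9]$). We bound
$$\sum_k W(k)|T(k)|^2=\sum_{l,l'}c_l\overline{c_{l'}}\sum_k W(k)\,\overline\chi(k-l)\chi(k-l')$$
by expanding $W$ as in Step 1.

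\emph{Diagonal $l=l'$.} Here the inner sum is at most $\sum_k W(k)\ll|\mathcal S|$, up to the Weil error already controlled in Step 1. This contributes $\ll |\mathcal S|\, p\sum_{|l|>K_0}l^{-2}\ll |\mathcal S|\,p/K_0$.

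\emph{Off-diagonal $l\ne l'$.} Since $|l|,|l'|>K_0$, the roots $l$ and $l'$ are distinct from those in $[-K_0,K_0]$. They appear with exponents $-1$ and $+1$, which are $\not\equiv0 \pmod d$, so the polynomial is never a $d$-th power, whatever the $j_l$ are. This is exactly why the range $|l|\le K_0$ is kept in $\widetilde F$. Proposition~\ref{prop:Weil} then bounds each inner sum by $(2K_0+2)\sqrt p\,(C\log p)^{2K_0+1}$. Summing against $\bigl(\sum|c_l|\bigr)^2\ll p\log^2p$ gives a total of $p^{3/2+o(1)}$, which is $o(|\mathcal S|\,p/K_0)$ by Step 1.

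Finally, $W\ge0$ and $W=\mathbb 1_{\mathcal S}$ outside $O(K_0)$ values of $k$, so the average of $|T(k)|^2$ over $k\in\mathcal S$ is $\ll p/K_0+o(p)\ll p$. Choosing $k\in\mathcal S$ with $|T(k)|^2$ at most this average gives $|T(k)|\ll\sqrt p$, and hence \eqref{lem:key-0}.

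\textbf{Main obstacle.} The delicate point is balancing the loss $(C\log p)^{2K_0+1}$ coming from the $\ell^1$ norm of the weight against the saving $\sqrt p$ from Weil. This is what forces $K_0\approx\log p/(\log\log p)^2$ rather than a larger value. A related check is that every non-trivial term in both expansions really involves a polynomial that is not a $d$-th power. The "arc of width $d/20$" in the definition of $\mathcal{S}$ is what keeps $\hat w_l(0)$ bounded below uniformly in $d$, even when $d$ is as large as $p-1$.
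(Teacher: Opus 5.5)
Your plan is the paper's plan: Fourier detection of the arcs on $\mu_d$, Weil to get $|\mathcal S|\ge p^{1-o(1)}$, a mean-square bound for the tail of Proposition~\ref{prop:BKS} over $\mathcal S$ with Weil on the off-diagonal, then pigeonhole. Step~1 and the diagonal are fine. The off-diagonal step, however, is false as written, because the roots live in $\mathbb F_p$, not in $\mathbb Z$. With $K=p$ your tail runs over all $K_0<|l|\le p-1$ of both signs, and two kinds of pairs escape Weil.

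\emph{Pairs of type (i).} If $K_0<l<p-K_0$ and $l'=l-p$, then $l\neq l'$ and $|l'|>K_0$. Yet $\overline\chi(k-l)\chi(k-l')=|\chi(k-l)|^2$, so the inner sum is $\sum_{k\neq l}W(k)$. This is about $|\mathcal S|\ge p^{1-o(1)}$, certainly not $O(K_0\sqrt p\,(C\log p)^{2K_0+1})=p^{1/2+o(1)}$.

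\emph{Pairs of type (ii).} If $p-K_0\le|l|\le p-1$, the root $l$ is congruent to some $l_0\in[-K_0,K_0]$, and its exponent merges with $j_{l_0}$. When $l'$ is of the same type, suitable choices of $j_{l_0},j_{l_0'}$ in the expansion of $W$ turn the polynomial into $(x-l_0)^d(x-l_0')^d$. That is a $d$-th power, so Proposition~\ref{prop:Weil} gives nothing.

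The repair is cheap. Bound these pairs trivially, using $|W(k)\overline\chi(k-l)\chi(k-l')|\le\mathbb{1}_{\mathcal S}(k)$. Family (i) contributes $\ll|\mathcal S|\sum_{K_0<l<p-K_0}\frac{p}{l(p-l)}\ll|\mathcal S|\log p$. Family (ii) consists of $O(K_0^2)$ pairs with $|c_lc_{l'}|\ll 1/p$, so it contributes $\ll|\mathcal S|K_0^2/p$. In every remaining pair, at least one of $l,l'$ is a root of exponent $\pm1$ that is distinct mod $p$ from all the others. So your Weil bound is valid there, and the conclusion stands.

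Alternatively, do what the paper does. Take $K=p-K_0-1$ in Proposition~\ref{prop:BKS}; the error is still $O(\log p)$, and now no tail index is congruent mod $p$ to a point of $[-K_0,K_0]$. Then split the tail according to the sign of $l$, so that distinct indices of the same sign are distinct mod $p$. Your off-diagonal argument then applies verbatim to each piece.
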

\begin{proof}
    Define
    $$ W(k) \vcentcolon= \prod_{\left|l\right|\le K_0}\sum_{\left|a\right|\le\frac{d}{20}}\sum_{j=0}^{d-1}\left(\overline{\chi}\left(k-l\right)\xi_le\left(\frac{a}{d}\right)\right)^j. $$
    Since the union in the definition of $\mathcal{S}$ is disjoint, by orthogonality in $\mu_d$, we have
    \begin{equation}\label{lem:key-1}
        W(k) = d^{2K_0+1}\mathbb{1}_{\mathcal{S}}(k) + O\left(d^{2K_0+1} \mathbb{1}_{[0,K_0] \cup [p-K_0,p-1]}(k)\right).
    \end{equation}
    Next, expand the product that defines $W(k)$ to obtain
    $$
    W(k) = \sum_{P\in\mathcal{P}}  \overline{\chi}\left(P(k)\right) \prod_{\left|l\right|\le K_0}{\xi_l^{j_{l,P}}\sum_{\left|a\right|\le\frac{d}{20}} e\left(\frac{aj_{l,P}}{d}\right)},
    $$
    where $\mathcal{P}$ is the set of all polynomials of the form $P(x)=\prod_{|l|\le K_0} (x-l)^{j_{l,P}}$ with $0 \le j_{l,P} \le d-1$ for all $|l| \le K_0$. It follows from Weil’s bound (Proposition~\ref{prop:Weil}) that
    \begin{equation*}
        \begin{aligned}
            \sum_{k=0}^{p-1}W(k)
            &= p\left(2\left\lfloor\frac{d}{20}\right\rfloor+1\right)^{2K_0+1}+O\left(\sum_{P\in\mathcal{P}\backslash\{1\}}{\left|\sum_{k=0}^{p-1}{\overline{\chi}\left(P(k)\right)}\right|\prod_{\left|l\right|\le K_0}\left|\sum_{\left|a\right|\le\frac{d}{20}} e\left(\frac{aj_{l,P}}{d}\right)\right|}\right) \\
            &= p\left(2\left\lfloor\frac{d}{20}\right\rfloor+1\right)^{2K_0+1}+O\left(K_0\sqrt p\sum_{P\in\mathcal{P}\backslash\{1\}}\prod_{\left|l\right|\le K_0}\left|\sum_{\left|a\right|\le\frac{d}{20}} e\left(\frac{aj_{l,P}}{d}\right)\right|\right).
        \end{aligned}
    \end{equation*}
    By the inequality $\sum_{|a|\le x} e(a\theta) \ll \frac{1}{\|\theta\|}$, we have
    \begin{equation*}
        \begin{aligned}
            \sum_{P\in\mathcal{P}\backslash\{1\}}\prod_{\left|l\right|\le K_0}\left|\sum_{\left|a\right|\le\frac{d}{20}} e\left(\frac{aj_{l,P}}{d}\right)\right|
            &\le \sum_{\substack{(j_l)_{|l| \le K_0} \\ \forall l, 0 \le j_l \le d-1}} \prod_{\left|l\right|\le K_0}\left|\sum_{\left|a\right|\le\frac{d}{20}} e\left(\frac{aj_l}{d}\right)\right| \\
            &= \prod_{\left|l\right|\le K_0}\sum_{j=0}^{d-1}\left|\sum_{\left|a\right|\le\frac{d}{20}} e\left(\frac{aj}{d}\right)\right| \\
            &\ll \left(C_1\left(d+\sum_{1\le j\le\frac{d}{2}}\frac{d}{j}+\sum_{\frac{d}{2}<j\le d-1}\frac{d}{d-j}\right)\right)^{2K_0+1} \\
            &\ll \left(C_2d\log{d}\right)^{2K_0+1}
        \end{aligned}
    \end{equation*}
    for some constants $C_1,C_2>0$. Thus,
    \begin{align}\label{lem:key-2}
        \sum_{k=0}^{p-1}W(k)
        &= p\left(2\left\lfloor\frac{d}{20}\right\rfloor+1\right)^{2K_0+1}+O\left(K_0\sqrt p\left(C_2 d\log{d}\right)^{2K_0+1}\right) \nonumber \\
        &\asymp p\left(2\left\lfloor\frac{d}{20}\right\rfloor+1\right)^{2K_0+1},
    \end{align}
    since $d \le p$ and by our choice of $K_0$. In particular, it follows from \eqref{lem:key-1} that
    \begin{align}\label{lem:key-2.5}
        |\mathcal{S}|
        &= \frac{1}{d^{2K_0+1}} \sum_{K_0 < k < p-K_0} W(k)
        = \frac{1}{d^{2K_0+1}} \sum_{k=0}^{p-1} W(k) + O(K_0) \nonumber \\
        &\asymp \frac{p}{d^{2K_0+1}}\left(2\left\lfloor\frac{d}{20}\right\rfloor+1\right)^{2K_0+1}.
    \end{align}
    
    Now, we shall prove that
    \begin{equation}\label{lem:key-2.1}
        \frac{1}{\left|\mathcal{S}\right|}\sum_{k\in\mathcal{S}}\left|F_{\chi,k}(\alpha,\beta)-{\widetilde{F}}_{\chi,k}(\alpha,\beta)\right| \ll \sqrt{p}.
    \end{equation}
    Let $K > K_0$ be an integer, by the triangle inequality, we have for all integers $k \in [0,p-1]$,
    \begin{equation*}
        \begin{multlined}
            \left|F_{\chi,k}(\alpha,\beta)-{\widetilde{F}}_{\chi,k}(\alpha,\beta)\right|
            \le \frac{\sqrt p}{2\pi}\left|\sum_{K_0<\left|l\right|\le K}{\frac{e\left(\beta\left(l+t\right)\right)-e\left(\alpha\left(l+t\right)\right)}{l+t}\overline{\chi}\left(k-l\right)}\right| \\
            + \left|F_{\chi,k}(\alpha,\beta)-\frac{\tau(\chi)e(\alpha k)}{2\pi i}\sum_{\left|l\right|\le K}{\frac{e\left(\beta\left(l+t\right)\right)-e\left(\alpha\left(l+t\right)\right)}{l+t}\overline{\chi}\left(k-l\right)}\right|.
        \end{multlined}
    \end{equation*}
    Take $K=p-K_0-1$. Then, the second sum is $\ll\log{p}$ by Proposition \ref{prop:BKS} and the first sum is
    $$ \ll\sqrt p\left(\left|\sum_{K_0<\left|l\right| < p-K_0}{\frac{e(\beta l)}{l+t}\overline{\chi}\left(k-l\right)}\right|+\left|\sum_{K_0<\left|l\right| < p-K_0}{\frac{e(\alpha l)}{l+t}\overline{\chi}\left(k-l\right)}\right|\right), $$
    hence it suffices to prove that for $\epsilon\in\left\{-1,+1\right\}$ and $x\in\mathbb{R}$, we have
    \begin{equation}\label{lem:key-3}
        \frac{1}{\left|\mathcal{S}\right|}\sum_{k\in\mathcal{S}}\left|\sum_{K_0<\epsilon l<p-K_0}{\frac{e(lx)}{l}\overline{\chi}(k-l)}\right|\ll1.
    \end{equation}
    Expressing the indicator function of $\mathcal{S}$ as in \eqref{lem:key-1} and using \eqref{lem:key-2.5}, we have
    \begin{equation}\label{lem:key-4}
        \begin{aligned}
            &\frac{1}{\left|\mathcal{S}\right|}\sum_{k\in\mathcal{S}}\left|\sum_{K_0<\epsilon l<p-K_0}{\frac{e(lx)}{l}\overline{\chi}(k-l)}\right|^2 \\
            &\ll \sum_{K_0<\epsilon l<p-K_0}\frac{1}{l^2} \\
            &\quad +\frac{1}{p\left(2\left\lfloor\frac{d}{20}\right\rfloor+1\right)^{2K_0+1}}\sum_{K_0<\epsilon l_1\neq\epsilon l_2<p-K_0}{\frac{1}{l_1 l_2}\left|\sum_{K_0<k<p-K_0}W(k)\overline{\chi}(k-l_1)\chi(k-l_2)\right|}.
        \end{aligned}
    \end{equation}
    Similarly to \eqref{lem:key-2}, when $l_1\neq l_2$ with $K_0<\left|l_1\right|,\left|l_2\right|<p-K_0$, we have
    \begin{equation}\label{lem:key-5}
        \begin{multlined}
            \left|\sum_{K_0<k<p-K_0}W(k)\overline{\chi}(k-l_1)\chi(k-l_2)\right| \\
            = \left|\sum_{k=0}^{p-1}W(k)\overline{\chi}(k-l_1)\overline{\chi}^{d-1}(k-l_2)\right| + O\left(K_0 d^{2K_0+1}\right) 
            \ll K_0\sqrt p\left(Cd\log{d}\right)^{2K_0+1}
        \end{multlined}
    \end{equation}
    for some constant $C>0$. Indeed, one may replace the set $\mathcal{P}$ by the set $\mathcal{P}'$ of all polynomials of the form $P(x)=(x-l_1)(x-l_2)^{d-1}\prod_{|l|\le K_0} (x-l)^{j_{l,P}}$ with $0 \le j_{l,P} \le d-1$ for all $|l| \le K_0$. By assumption on $l_1$ and $l_2$, elements of $\mathcal{P}' \backslash \{1\}$ are not $d$-th powers in $\mathbb{F}_p[X]$, hence Weil's bound may be applied. Inserting \eqref{lem:key-5} into \eqref{lem:key-4} and applying the Cauchy-Schwarz inequality, we obtain \eqref{lem:key-3}, and consequently \eqref{lem:key-2.1}. It follows that there exists $k \in \mathcal{S}$ that satisfies \eqref{lem:key-0}.
\end{proof}

We now state another technical lemma before proving Theorem~\ref{thm:main}.

\begin{lemma}\label{lem:sumcos}
    Fix $u_1,u_2,c_1,c_2\in\mathbb{R}$ with $c_1,c_2\notin\frac{1}{2}\mathbb{Z}$. Then, for all $K\geq2$, we have
    $$ \sum_{1\le l\le K}\frac{\left|\sin{\left(\pi\left(u_1l+c_1\right)\right)}\sin{\left(\pi\left(u_2l+c_2\right)\right)}\right|}{l}\gg \log{K}, $$
    where the implicit constant only depends on the fractional parts of $u_1$, $u_2$, $c_1$ and $c_2$.
\end{lemma}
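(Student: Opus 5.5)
We need a lower bound of order $\log K$ for
$$\sum_{1\le l\le K}\frac{g(l)}{l},\qquad g(l):=\bigl|\sin(\pi(u_1l+c_1))\sin(\pi(u_2l+c_2))\bigr|.$$
The plan is to show that $g(l)$ is bounded below by a positive constant $\delta$ on a set of $l$ containing at least one element from every block of $q$ consecutive integers, where $q$ and $\delta$ depend only on the fractional parts. Summing $1/l$ over such a set then gives $\gg \frac{\delta}{q}\log K$. Each factor depends only on $u_i$ and $c_i$ modulo $1$, up to sign, and the absolute value removes the sign; so we may assume $u_i,c_i\in[0,1)$.

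\textbf{Step 1 (a single factor).} Fix $i$ and write $h_i(l)=|\sin(\pi(u_il+c_i))| \asymp \|u_il+c_i\|$.

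If $u_i$ is rational, say $u_i=a_i/q_i$, then $u_il+c_i \bmod 1$ takes only finitely many values. None of these values is $0$ or $1/2$ modulo $1$... but actually we only need the relevant ones. Note that $\|u_il+c_i\|$ can vanish only if $c_i\in\frac1{q_i}\mathbb{Z}$. The hypothesis $c_i\notin\frac12\mathbb{Z}$ guarantees that $l\equiv 0 \pmod{q_i}$ gives $\|c_i\|>0$. So on the progression $l\equiv0\pmod{q_i}$ we have $h_i(l)=|\sin\pi c_i|>0$.

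If $u_i$ is irrational, then $u_il+c_i$ is equidistributed. More simply, among any two consecutive integers $l,l+1$, at least one satisfies $\|u_il+c_i\|\ge \|u_i\|/2$, because the two values differ by $u_i$ modulo $1$. This alone, however, is not enough to handle both factors simultaneously.

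\textbf{Step 2 (both factors at once).} This is the main obstacle: the two good sets must overlap with positive lower density, with constants depending only on the data. I would handle it as follows.

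Choose $q$ to be the product of the denominators of the rational $u_i$ (or $q=1$ if there are none), and restrict to $l=qm$. Along this progression, each rational factor is constant and equal to $|\sin \pi c_i|>0$. This is exactly where $c_i\notin\frac12\mathbb{Z}$ is used, together with $c_i\notin\mathbb{Z}$. It remains to treat the irrational factors along the progression, where the relevant frequency is $qu_i$, still irrational.

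For each irrational factor, the set of $m$ with $\|qu_im+c_i\|<\eta$ has density $2\eta$ by Weyl equidistribution. Choosing $\eta=1/10$, the union of the two bad sets has density at most $2/5$. So the good set has density at least $3/5$, and partial summation gives $\sum_{m\le K/q,\ m\ \mathrm{good}} 1/m\ge (3/5-o(1))\log K$.

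For the constant to be uniform in $K$ we only need $K$ large relative to the data; small $K\ge2$ is covered by the single term $l=q$ (or by the finitely many $l\le q$), since $g(q)>0$ under the hypotheses. Since the data is fixed, dependence of the implied constant on the fractional parts through equidistribution rates is permitted. Hence
$$\sum_{l\le K}\frac{g(l)}{l}\ \ge\ \frac{\delta}{q}\sum_{\substack{m\le K/q\\ m\ \mathrm{good}}}\frac1m\ \gg\ \log K,$$
with $\delta=\min\bigl(|\sin\pi c_i|,\ \sin(\pi/10)\bigr)^2$.
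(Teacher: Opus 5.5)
Your argument is correct for $K$ large in terms of the data. That is also all the paper's proof gives, and all the application with $K=K_0\to\infty$ needs. Your route differs from the paper's.

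The paper does not split into rational and irrational $u_i$. On each dyadic block $[x,2x]$ it uses equidistribution of $u_il+c_i$ on the closure of its orbit modulo $1$ (a finite coset of $\frac1q\mathbb{Z}$, or the whole circle). This shows that each factor \emph{separately} satisfies $\|u_il+c_i\|\ge\|2c_i\|/20$ for at least $0.6x$ of the $l\in[x,2x]$. Inclusion--exclusion then leaves at least $0.2x$ values of $l$ where both factors are $\gg1$. The hypothesis $c_i\notin\frac12\mathbb{Z}$ is what makes this threshold positive and keeps the 60\% count valid when $u_i$ has denominator $2$.

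You instead freeze the rational factors on the progression $l\equiv0\pmod{q}$, where they equal the constants $|\sin\pi c_i|$. You then apply Weyl and a union bound only to the irrational factors. This avoids counting points of a finite coset lying near $0$. It also uses only $c_i\notin\mathbb{Z}$, and only for rational $u_i$, so you in fact prove a slightly stronger statement. The cost is a case distinction and a constant of size $\delta/q$. In both proofs the dependence on an irrational $u_i$ goes through an ineffective equidistribution rate, which the statement allows.

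Your treatment of small $K$ is wrong as written, in two ways:
\begin{itemize}
\item The term $l=q$ is not in the sum when $K<q$.
\item $g(q)>0$ can fail once an irrational factor is present. For example, $u_1=\sqrt2$, $c_1=2-\sqrt2$ with $u_2$ irrational gives $q=1$ and $g(1)=0$.
\end{itemize}
What is true is that each irrational factor vanishes for at most one $m$, so $g(qm)>0$ for some $m\le3$.

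More to the point, no argument can reach $K=2$. The admissible choice $u_1=u_2=\frac13$, $c_1=\frac23$, $c_2=\frac13$ gives $g(1)=g(2)=0$, so the sum is $0$ at $K=2$. The lemma should be read for $K\ge3$. It does hold there: under $c_i\notin\frac12\mathbb{Z}$, a rational $u_i$ whose factor has a zero must have denominator at least $3$. Hence each factor vanishes for at most one $l\in\{1,2,3\}$, and the range $3\le K\le K_1$ is absorbed into the constant. The paper's proof likewise only treats large $K$, so this is a defect of the statement, not of your method.
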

\begin{proof}
   Without loss of generality, assume that $0 \le u_1,u_2,c_1,c_2 < 1$. Let $x \ge 1$ be large enough depending on $u_1$ and $c_1$. Then, one can observe that the number of integers $l \in [x,2x]$ such that $\|u_1 l+c_1\| \ge \frac{\|2c_1\|}{20}$ is $\ge 0.6x$, since the sequence $(u_1 l+c_1)_l$ is equidistributed on the closure of its image mod $1$. Since a similar statement holds with $u_2l+c_2$, we deduce that for $x$ large enough depending on $u_1$, $u_2$, $c_1$ and $c_2$, we have $\left|\sin{\left(\pi\left(u_1l+c_1\right)\right)}\right|\gg_{c_1}1$ and $\left|\sin{\left(\pi\left(u_2l+c_2\right)\right)}\right|\gg_{c_2}1$ for $\ge 0.2x$ values of $l\in\left[x,2x\right]$, and the lemma follows by dyadic decomposition.
\end{proof}

\begin{proof}[Proof of the lower bound in Theorem~\ref{thm:main}]
    It suffices to prove that there exists an integer $k \in [0,p-1]$ such that \eqref{lem:key-0} holds and
    \begin{equation*}\label{thm:main-1}
        \left|{\widetilde{F}}_{\chi,k}(\alpha,\beta)\right| \gg_{\alpha,\beta} \sqrt{p}\log{K_0},
    \end{equation*}
    for some choice of $t \in [0.1,0.9]$ depending at most on $\alpha$ and $\beta$.

    First, assume that $d\geq3$. Then, for all $\left|l\right|\le K_0$, choose $\xi_l\in\mu_d$ such that
    $$ -\frac{\pi}{d}\le\arg{\left(\frac{\left(e\left(\beta\left(l+t\right)\right)-e\left(\alpha\left(l+t\right)\right)\right)\overline{\xi_l}}{l+t}\right)}\le\frac{\pi}{d}, $$
    with the convention that $\arg{0}=0$. Then, let $k$ be given by Lemma~\ref{lem:key}, since $\left|\tau(\chi)\right|=\sqrt p$, we have
    \begin{align}\label{thm:main-2}
        \left|{\widetilde{F}}_{\chi,k}(\alpha,\beta)\right|
        &\geq \frac{\sqrt{p}}{2\pi}\sum_{|l| \le K_0}\frac{\left|e\left(\beta\left(l+t\right)\right)-e\left(\alpha\left(l+t\right)\right)\right|\cos{\left(\frac{\pi}{3}+\frac{2\pi}{20}\right)}}{\left|l+t\right|} \nonumber \\
        &\gg \sqrt{p}\sum_{1\le l\le K_0}\frac{\left|\cos{\left(2\pi\left(\beta\left(l+t\right)\right)\right)}-\cos{\left(2\pi\left(\alpha\left(l+t\right)\right)\right)}\right|}{l}.
    \end{align}

        If $d=2$, then $\chi$ is a real character and \eqref{thm:main-2} holds for some $k$ provided by Lemma~\ref{lem:key}, after taking $\xi_l=\operatorname{sgn}{\left(\frac{\cos{\left(2\pi\left(\beta\left(l+t\right)\right)\right)}-\cos{\left(2\pi\left(\alpha\left(l+t\right)\right)\right)}}{l+t}\right)}$, with the convention that $\operatorname{sgn}{(0)}=1$.

        Now, \eqref{thm:main-2} yields
        $$ \left|{\widetilde{F}}_{\chi,k}(\alpha,\beta)\right|\gg_t\sum_{1\le l\le K_0}\frac{\left|\sin{\left(\pi\left(\alpha+\beta\right)\left(l+t\right)\right)}\sin{\left(\pi\left(\beta-\alpha\right)\left(l+t\right)\right)}\right|}{l}, $$
        hence it suffices to choose $t\in[0.1,0.9]$ such that $\left(\alpha+\beta\right)t, \left(\beta-\alpha\right)t\notin\frac{1}{2}\mathbb{Z}$ to conclude with Lemma~\ref{lem:sumcos}.
\end{proof}

\bibliographystyle{plain}
\bibliography{bibliography}

\end{document}